\newtheorem{theorem}{Theorem}[section]
\newtheorem{corollary}[theorem]{Corollary}
\newtheorem{lemma}[theorem]{Lemma}
\newtheorem{definition}[theorem]{Definition}
\numberwithin{equation}{section}
\font\script=rsfs10 at 11pt
\def\H{{\mbox{\script H}\;}}
\def\N{\mathbb N}
\def\R{\mathbb R}
\def\S{\mathbb S}
\def\eps{\varepsilon}
\def\angle#1#2#3{#1\widehat{#2}#3}
\def\arc#1{\wideparen{#1}}
\newcounter{mt}
\def\maintheorem#1#2#3{\par \medskip \noindent {\bf Theorem~\mref{#1}}~(#2).~{\it #3}\par}
\def\mref#1{\Alph{#1}}
\def\maintheoremdeclaration#1{\stepcounter{mt}\newcounter{#1}\setcounter{#1}{\arabic{mt}}}
\title{Regularity of isoperimetric sets in $\R^2$ with density}
\author{E. Cinti}
\author{A. Pratelli}
\begin{document}

\begin{abstract}
We consider the isoperimetric problem in $\R^n$ with density for the planar case $n=2$. We show that, if the density is ${\rm C}^{0,\alpha}$, then the boundary of any isoperimetric is of class ${\rm C}^{1,\frac \alpha{3-2\alpha}}$. This improves the previously known regularity.
\end{abstract}

\maketitle

\section{Introduction}

The isoperimetric problem in $\R^n$ with density is a classical problem, which has received much attention in the last decade. The idea is quite simple: given a l.s.c. function $f:\R^n\to (0,+\infty)$, usually called ``density'', for any set $E\subseteq \R^n$ we define the volume $V_f(E)$ and the perimeter $P_f(E)$ as
\begin{align*}
V_f(E) := \int_E f(x)\, dx\,, &&
P_f(E) := \int_{\partial^* E} f(x)\, d\H^{n-1}(x)\,,
\end{align*}
where the subscript reminds the fact that volume and perimeter are done with respect to $f$, and where $\partial^* E$ is the reduced boundary of $E$ (to read this paper there is no need to know what the reduced boundary is, since under our assumptions this will always coincide with the usual topological boundary $\partial E$). The literature on this problem is huge, a short and incomplete list is~\cite{All2,BigReg,Bom,Morgan2005,CMV,CP1,DHHT,FMP2,MP,RCBM}).\par

The main questions about the isoperimetric problem with density are of course the existence and regularity of isoperimetric sets; concerning the existence, now a lot is known (see for instance~\cite{MP,DFP,GC}). Since in this paper we are dealing with the regularity, let us briefly recall the main known results. The first, very important one, can be found in~\cite[Proposition~3.5 and Corollary~3.8]{Morgan2003} (see also~\cite{All1,Alm}).
\begin{theorem}\label{veryold}
Let $f$ be a ${\rm C}^{k,\alpha}$ density on $\R^n$, with $k\geq 1$. Then the boundary of any isoperimetric set is ${\rm C}^{k+1,\alpha}$, except for a singular set of Hausdorff dimension at most $n-8$. If $f$ is just Lipschitz and $n=2$, then the boundary is of class ${\rm C}^{1,1}$.
\end{theorem}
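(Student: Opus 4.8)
The plan is to reduce the statement, following the by-now standard scheme, to the regularity theory of almost-minimizers of the Euclidean perimeter, and then to bootstrap via the Euler--Lagrange equation. First I would prove that any isoperimetric set $E$ (which we may take bounded, with $0<|E|<\infty$) is a $(\Lambda,r_0)$-minimizer of the usual perimeter: there exist $\Lambda>0$ and $r_0>0$ with $P(E;B_r(x))\le P(F;B_r(x))+\Lambda\,|E\triangle F|$ whenever $E\triangle F\subset\subset B_r(x)$ and $r<r_0$. Since the isoperimetric inequality $P_f(E)\le P_f(F)$ is available only for competitors with $V_f(F)=V_f(E)$, the crux here is a volume-fixing lemma: fixing once and for all a point $p\in\partial^*E$ (nonempty because $|E|\in(0,\infty)$) and a small ball $B_\rho(p)$, one builds a one-parameter family of deformations of $E$ supported in $B_\rho(p)$ whose volume derivative is bounded away from $0$, so that any volume defect $V_f(F)-V_f(E)$ created by a modification away from $B_\rho(p)$ is absorbed at a perimeter cost $\le C\,|V_f(F)-V_f(E)|\le C\|f\|_{L^\infty}|E\triangle F|$; together with $0<m\le f\le M$ near $\overline E$, which lets one compare $P_f$ with $P$, this gives the claimed minimality (using two reference balls to cover also the balls $B_r(x)$ near $p$). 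Then the classical theory of almost-minimizers (De Giorgi, Almgren, Tamanini; see also Schoen--Simon) yields that $\partial E$ is a $C^{1,\gamma}$ hypersurface for every $\gamma<1/2$, outside a closed singular set of Hausdorff dimension at most $n-8$; in particular this set is empty when $n\le 7$, and for $n=2$ the boundary is everywhere a $C^{1,\gamma}$ curve.

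On the part of $\partial E$ that is already $C^{1,\gamma}$ I would write the first variation of $P_f$. Locally $\partial E=\{x_n=u(x')\}$ with $u\in C^{1,\gamma}$, and for a normal perturbation of velocity $\varphi\nu$ one has $\delta P_f(E)[\varphi\nu]=\int_{\partial E}\big(fH+\partial_\nu f\big)\varphi\,d\H^{n-1}$ and $\delta V_f(E)[\varphi\nu]=\int_{\partial E}f\varphi\,d\H^{n-1}$, $H$ being the scalar mean curvature; the volume constraint then produces a Lagrange multiplier $\lambda\in\R$ with, weakly on $\partial E$,
\[
H=\lambda-\frac{\partial_\nu f}{f}\,,
\]
equivalently $u$ solves $\operatorname{div}\!\big(Du/\sqrt{1+|Du|^2}\big)=g$ with $g(x')=\big(\lambda-\partial_\nu f/f\big)(x',u(x'))$.

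Now I would bootstrap. As $f\in C^{k,\alpha}$ with $k\ge1$, $f\ge m>0$, and $\nu$ depends smoothly on $Du\in C^{0,\gamma}$, the right-hand side $g$ lies in $C^{0,\beta}$ with $\beta=\min(\alpha,\gamma)>0$, so Schauder estimates for the minimal surface operator give $u\in C^{2,\beta}$; this improves $\nu$ to $C^{1,\beta}$, hence $g$ by one derivative, hence $u$ to $C^{3,\cdot}$, and after finitely many such steps --- the Hölder exponent settling at $\alpha$ once the regularity of $\nabla f\in C^{k-1,\alpha}$, rather than that of $\nu$, is the bottleneck --- one obtains $u\in C^{k+1,\alpha}$, which is the first assertion. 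For the last one, $f$ Lipschitz gives $\nabla f\in L^\infty$, so the curvature of the (planar, since $n=2$) curve $\partial E$ satisfies $|H|\le|\lambda|+\|\nabla f\|_{L^\infty}/m\in L^\infty$; writing $\partial E$ locally as a graph $u\in C^{1,\gamma}$, the equation becomes $u''=g\,(1+(u')^2)^{3/2}\in L^\infty$, so $u'$ is Lipschitz and $\partial E$ is $C^{1,1}$. (This final step is special to $n=2$: for $n\ge3$ the minimal surface equation with bounded right-hand side only gives $W^{2,p}$ for every $p<\infty$, not $C^{1,1}$.)

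The main obstacle is the first step, the $(\Lambda,r_0)$-minimality, and more precisely the volume-fixing construction: one needs to know a priori that $E$ is nondegenerate --- that there is a ball in which $\partial^*E$ has positive $\H^{n-1}$-measure and $E$ has density strictly between $0$ and $1$ --- for the auxiliary volume-adjusting deformation to be both admissible and cheap, and it is exactly here that the positivity and boundedness of $f$ (and a preliminary density estimate for $E$) are used. Once the Euler--Lagrange equation is justified on the $C^{1,\gamma}$ part of $\partial E$, the Schauder bootstrap is routine, the only care being the elementary accounting of how the Hölder exponent of $\nu$ propagates through the iteration.
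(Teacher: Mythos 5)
Your proposal is essentially the standard argument behind this statement, which the paper itself does not prove but simply quotes from~\cite{Morgan2003} (Proposition~3.5 and Corollary~3.8, building on Allard, Almgren and Tamanini): volume-fixing deformation $\Rightarrow$ $(\Lambda,r_0)$-minimality of the Euclidean perimeter $\Rightarrow$ ${\rm C}^{1,\gamma}$ regularity outside a singular set of dimension at most $n-8$ $\Rightarrow$ weak Euler--Lagrange equation $fH+\partial_\nu f=\lambda f$ and Schauder bootstrap, with the planar ${\rm C}^{1,1}$ case following from the bounded-curvature ODE argument. The outline is correct, and you correctly identify the one genuinely delicate step, namely that the volume-fixing (i.e.\ the $\eps$--$\eps$ property, with perimeter cost linear in the volume defect) is exactly where the Lipschitz regularity of $f$ is indispensable --- this is the same point the paper's introduction emphasizes as the reason these classical techniques break down for merely H\"older densities.
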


It is important to point out that the above result requires at least a ${\rm C}^{1,\alpha}$ regularity for $f$ (or Lipschitz in the $2$-dimensional case). The reason is rather simple: namely, most of the standard techniques to get the regularity make sense only if $f$ is at least Lipschitz (for a discussion on this fact, see~\cite[Section~5]{CP1}). In particular, the following observation can be particularly insightful. In order to get regularity of an isoperimetric set $E$, a standard idea is to build some ``competitor'' $F$, which behaves better than $E$ where the boundary of $E$ is not regular enough; however, in order to get some contradiction, we must ensure that $F$ has the same volume as $E$, since otherwise the isoperimetric property of $E$ cannot be used. On the other hand, it can be complicate to build the set $F$ taking its volume into account, since while defining $F$ one is interested in its perimeter. As a consequence, it is extremely useful to have the so-called ``$\eps-\eps$ property'', which roughly speaking says the following: it is always possible to modify the volume of a set $F$ of a small quantity $\eps$, increasing its perimeter of at most $C|\eps|$ for some fixed constant $C$; if this is the case, one can then ``adjust'' the volume of $F$ so that it coincides with the one of $E$. This property was already discussed by Allard, Almgren and Bombieri since the 1970's (see for instance~\cite{All1, All2, Alm, BigReg, Bom}), and it has been widely used in most of the papers about the regularity in this context since then. Unfortunately, while the $\eps-\eps$ property is rather simple to establish when $f$ is at least Lipschitz, it is false if $f$ is not Lipschitz.\par

To get anyhow some regularity for the isoperimetric sets in case of low regularity of $f$, in the recent paper~\cite{CP1} we have introduced and proved a weaker property, called the ``$\eps-\eps^\beta$ property'', which basically says that the volume of a set can be modified of $\eps$, increasing the perimeter of at most $C|\eps|^\beta$. Since we will use this property in the present paper, we give here the result (the actual result proved in~\cite{CP1} is more general, but we prefer to claim here the simpler version that we are going to need).

\begin{theorem}[\cite{CP1}, Theorem~B]\label{oldepsepsbeta}
Let $E\subseteq\R^n$ be a set of locally finite perimeter, and $f$ an $\alpha$-H\"older density for some $0<\alpha\leq 1$. Then, for every ball $B$ with nonempty intersection with $\partial^* E$, there exist two constants $\bar \eps,\, C>0$ such that, for every $|\eps| < \bar\eps$, there is a set $\widetilde E$ satisfying
\begin{align}\label{propepepbe}
\widetilde E\Delta E\subset\subset B\,, && V_f(\widetilde E) = V_f (E) + \eps\,, && P_f(\widetilde E) \leq P_f(E) + C |\eps|^\beta\,,
\end{align}
where $\beta=\beta(\alpha,n)$ is defined by
\[
\beta=\beta(\alpha,n) := \frac{\alpha+(n-1)(1-\alpha)}{\alpha+n(1-\alpha)}\,,
\]
so for $n=2$ it is $\beta=\frac1{2-\alpha}$.
\end{theorem}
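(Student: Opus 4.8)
The plan is to produce $\widetilde E$ by flowing $E$ along a vector field that is localized and rescaled in a small ball around a boundary point, and then to choose the radius of that ball as a suitable power of $\eps$. Fix $x_0\in B\cap\partial^* E$. By De Giorgi's structure theorem the rescalings $(E-x_0)/r$ converge in $L^1_{\rm loc}$ to a half-space $H$, which after a rotation we take to be $\{x_n<0\}$, and $\mathcal H^{n-1}(\partial^* E\cap B_r(x_0))\le 2\omega_{n-1}r^{n-1}$ for all $r<\rho_0$, where $\rho_0$ is chosen with $B_{\rho_0}(x_0)\subset\subset B$. I would fix a smooth $\phi\ge 0$ supported in $B_1$ with $\int_{\{x_n=0\}}\phi\,d\mathcal H^{n-1}=1$, set $X_r(x):=\phi((x-x_0)/r)\,e_n$ (smooth, supported in $B_r(x_0)$), and let $\Phi^r_t$ be its flow, $E^r_t:=\Phi^r_t(E)$. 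By scaling $\Phi^r_t(x)=x_0+r\,\widetilde\Phi_{t/r}((x-x_0)/r)$ with $\widetilde\Phi$ the flow of $\phi\, e_n$, so that $\|D\Phi^r_t-\mathrm{Id}\|_\infty\le C|t|/r$, and hence $J\Phi^r_t=1+O(|t|/r)$ for both the full and the tangential Jacobian, uniformly for $|t|\le r$.

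First I would estimate the volume. Splitting $f=f(x_0)+(f-f(x_0))$ on $B_r(x_0)$, using the change of variables formula and the Gauss--Green theorem $\int_E\mathrm{div}\,X_r=\int_{\partial^* E}X_r\cdot\nu\,d\mathcal H^{n-1}$ (which does not involve $f$), one finds
\begin{align*}
V_f(E^r_t)-V_f(E)=f(x_0)\,t\int_{\partial^* E}X_r\cdot\nu\,d\mathcal H^{n-1}+O\big(t^2r^{n-2}\big)+O\big(r^\alpha\,|E^r_t\Delta E|\big),
\end{align*}
where only $|f-f(x_0)|\le[f]_\alpha r^\alpha$ on $B_r(x_0)$ (the H\"older bound) is used for the last error. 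A scaling argument together with the $L^1_{\rm loc}$ convergence of the blow-ups gives $\int_{\partial^* E}X_r\cdot\nu=\int_E\mathrm{div}\,X_r=r^{n-1}(1+o_r(1))$, while $|E^r_t\Delta E|\le\|\phi\|_\infty|t|\,P(E;B_r(x_0))\le C|t|r^{n-1}$ by slicing along the (vertical) flow lines; thus, for $|t|\le cr$ with $c$ and $r$ small, both errors are $o(|t|r^{n-1})$, and $G_r(t):=V_f(E^r_t)-V_f(E)$ is continuous with $G_r(0)=0$, $\mathrm{sign}\,G_r(t)=\mathrm{sign}\,t$ and $|G_r(t)|\ge\tfrac12 f(x_0)r^{n-1}|t|$ on $[-cr,cr]$. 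By the intermediate value theorem, provided $|\eps|\le\tfrac c2 f(x_0)r^n$ there is $t_\eps$ with $V_f(E^r_{t_\eps})=V_f(E)+\eps$ and $|t_\eps|\le 2|\eps|/(f(x_0)r^{n-1})$.

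For the perimeter, by the area formula (and since $\Phi^r_t=\mathrm{id}$ off $B_r(x_0)$),
\[
P_f(E^r_t)-P_f(E)=\int_{\partial^* E\cap B_r(x_0)}\!\big[f(\Phi^r_t(x))\,J^{\partial^* E}\Phi^r_t(x)-f(x)\big]\,d\mathcal H^{n-1}(x),
\]
and since $|f(\Phi^r_t(x))-f(x)|\le[f]_\alpha(\|\phi\|_\infty|t|)^\alpha$, $|J^{\partial^* E}\Phi^r_t-1|\le C|t|/r$ and $\mathcal H^{n-1}(\partial^* E\cap B_r(x_0))\le 2\omega_{n-1}r^{n-1}$, one gets $P_f(E^r_t)-P_f(E)\le C_1 r^{n-1}|t|^\alpha+C_2 r^{n-2}|t|$, again using only the H\"older regularity of $f$. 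Inserting $|t|=|t_\eps|\le 2|\eps|/(f(x_0)r^{n-1})$ gives $P_f(E^r_{t_\eps})-P_f(E)\le C(|\eps|^\alpha r^{(n-1)(1-\alpha)}+|\eps|\,r^{-1})$, and choosing $r=r(\eps):=|\eps|^{(1-\alpha)/((n-1)(1-\alpha)+1)}$ balances the two terms, yielding $P_f(E^r_{t_\eps})-P_f(E)\le C|\eps|^\beta$ with $\beta=\tfrac{\alpha+(n-1)(1-\alpha)}{\alpha+n(1-\alpha)}$ (so $\beta=\tfrac1{2-\alpha}$ when $n=2$). Since the exponent $\tfrac{1-\alpha}{(n-1)(1-\alpha)+1}$ is $<\tfrac1n$, for $|\eps|$ below a suitable $\bar\eps$ one has $r(\eps)<\rho_0$ and $|\eps|\le\tfrac c2 f(x_0)r(\eps)^n$, so $\widetilde E:=E^{r(\eps)}_{t_\eps}$ satisfies all of \eqref{propepepbe}; the case $\eps<0$ is symmetric. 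I expect the main obstacle to be the volume estimate: unlike the Lipschitz case one cannot differentiate $V_f$ under the integral sign, so one must show that only H\"older regularity of $f$ still leaves the leading term $f(x_0)\,t\,r^{n-1}$ untouched, the crucial input being the sharp bound $|E^r_t\Delta E|\lesssim|t|\,r^{n-1}$ on the region swept by the flow for a general set of finite perimeter; the remaining ingredients — De Giorgi's blow-up and density facts at $x_0$, the area formula, and the one-variable optimization in $r$ — are routine.
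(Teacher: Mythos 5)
This paper does not actually prove the statement: Theorem~\ref{oldepsepsbeta} is imported verbatim from \cite{CP1} (Theorem~B there), so there is no internal proof to compare with. Your argument is, as far as I can check, correct, and it reaches the right exponent for the right reason, but it proceeds differently from the proof in \cite{CP1}: there the competitor is built by an explicit cut-and-paste modification of $E$ inside a small cube centred at a point of $\partial^* E$ (adding or removing a thin slab, with the extra perimeter coming from the lateral faces of the cube and from the H\"older oscillation of $f$ between the original and the displaced piece of boundary), whereas you deform $E$ by the flow of a localized vertical vector field $X_r=\phi((x-x_0)/r)\,e_n$ and tune the time $t_\varepsilon$ by the intermediate value theorem. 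Both proofs ultimately rest on the same two-term balance, a ``geometric'' cost of order $|\varepsilon|/r$ (lateral faces in \cite{CP1}, the tangential Jacobian term $r^{n-2}|t|$ in your version) against a ``H\"older'' cost of order $|\varepsilon|^\alpha r^{(n-1)(1-\alpha)}$, and on the same optimal choice $r\sim|\varepsilon|^{(1-\alpha)/(\alpha+n(1-\alpha))}$, which is exactly why the same $\beta$ appears; your flow version has the advantage of avoiding the measure-theoretic bookkeeping needed when gluing sets (no new boundary is created, since $\Phi^r_t$ is a diffeomorphism equal to the identity outside $B_r(x_0)$), at the price of needing the area formula on $\partial^* E$ and first-variation-type expansions. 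The two points of your sketch that genuinely require care are correctly identified and do work: the bound $|E^r_t\Delta E|\le\|\phi\|_\infty|t|\,P(E;B_r(x_0))$ follows from Vol'pert-type vertical slicing precisely because the field is parallel to $e_n$, so flow lines are vertical and $\int_{\partial^* E\cap B_r}|\nu\cdot e_n|\,d\mathcal{H}^{n-1}\le \mathcal{H}^{n-1}(\partial^* E\cap B_r)$ controls the number of moved endpoints per line; and the leading volume term $\int_{\partial^* E}X_r\cdot\nu\,d\mathcal{H}^{n-1}=r^{n-1}(1+o_r(1))$ follows from the weak-$*$ convergence of $D\chi_{(E-x_0)/r}$ to $D\chi_H$ at the reduced-boundary point $x_0$, together with the perimeter density bound for $r$ small. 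Only a cosmetic remark: for $\alpha=1$ your formula gives $r(\varepsilon)\equiv 1$, so one should cap $r$ by $\rho_0/2$ (a fixed radius then yields the classical $\varepsilon$--$\varepsilon$ estimate), and the constants you obtain depend on the chosen point $x_0\in B\cap\partial^* E$ and on $\rho_0$, which is consistent with the statement since $\bar\varepsilon$ and $C$ are allowed to depend on $B$.
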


By using the classical regularity results (see for instance~\cite{Amb,Tam}), and modifying the arguments in order to make use of the $\eps-\eps^\beta$ property instead of the $\eps-\eps$ one (which is false), we then obtained the following regularity result (see~\cite{Tam} for a definition of porosity).

\begin{theorem}[\cite{CP1}, Theorem~5.7]\label{old}
Let $E$ be an isoperimetric set in $\R^n$ with a density $f\in {\rm C}^{0,\alpha}$, with $0<\alpha\leq 1$. Then $\partial^* E=\partial E$ is of class $C^{1,\frac{\alpha}{2n(1-\alpha)+2\alpha}}$. In particular, if $n=2$ then $\partial E$ is ${\rm C}^{1,\frac{\alpha}{4-2\alpha}}$. If $f$ is only bounded above and below, then it is still true that $\partial^* E=\partial E$, and moreover $E$ is porous.
\end{theorem}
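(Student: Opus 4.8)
The plan is to prove that every isoperimetric set $E$ is an almost-minimizer of the \emph{Euclidean} perimeter with the appropriate deviation exponent, and then to quote the classical regularity theory for such sets (see~\cite{Tam,Amb}). We may assume $0<\alpha<1$, since for $\alpha=1$ Theorem~\ref{veryold} already gives the stronger conclusion $C^{1,1}$. Fix $x_0\in\partial E$ and a ball $B'$ meeting $\partial^*E$ with $x_0\notin\overline{B'}$, and let $\bar\eps,\,C$ be the constants furnished by Theorem~\ref{oldepsepsbeta} for $B'$; these also work for any competitor $F$ with $F\Delta E\subset\subset B_r(x_0)$ once $r$ is small enough that $\overline{B_r(x_0)}\cap\overline{B'}=\emptyset$, because then $F$ agrees with $E$ near $B'$. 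For such an $F$, with $r$ also small enough that $\|f\|_\infty|B_r(x_0)|<\bar\eps$, set $\eps:=V_f(F)-V_f(E)$, so that $|\eps|\le\|f\|_\infty|F\Delta E|<\bar\eps$. Applying Theorem~\ref{oldepsepsbeta} to $F$, inside $B'$, with volume correction $-\eps$, produces a set $\widetilde F$ with $V_f(\widetilde F)=V_f(E)$ and $P_f(\widetilde F)\le P_f(F)+C|\eps|^\beta$; since $E$ is isoperimetric, $P_f(E)\le P_f(\widetilde F)$, hence $P_f(E)\le P_f(F)+C|\eps|^\beta$. As $E$ and $F$ coincide outside $B_r(x_0)$, the contributions there cancel, and since $|\eps|\le Cr^n$ we obtain the localized estimate $P_f(E;B_r(x_0))\le P_f(F;B_r(x_0))+Cr^{n\beta}$.

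Next I would pass to the Euclidean perimeter by means of the H\"older continuity of $f$: on $B_r(x_0)$ one has $|f-f(x_0)|\le Cr^\alpha$ with $f(x_0)>0$, so $P_f(E;B_r)\ge f(x_0)(1-Cr^\alpha)P(E;B_r)$ and $P_f(F;B_r)\le f(x_0)(1+Cr^\alpha)P(F;B_r)$; dividing the estimate of the previous step by $f(x_0)$ gives, for $r$ small, $P(E;B_r)\le(1+Cr^\alpha)P(F;B_r)+Cr^{n\beta}$. To absorb the multiplicative factor I would first establish the density upper bound $P(E;B_\rho(x_0))\le C\rho^{n-1}$, which follows from the localized estimate applied to the explicit competitor $F=E\setminus B_\rho(x_0)$, together with the fact that $n\beta>n-1$. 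For a general competitor one may assume $P(F;B_r)\le P(E;B_r)$, the desired inequality being otherwise trivial; then $P(F;B_r)\le Cr^{n-1}$, whence $Cr^\alpha P(F;B_r)\le Cr^{n-1+\alpha}$, and since $n\beta\le n-1+\alpha$ for $\alpha\le1$ we conclude that
\[ P(E;B_r(x_0))\ \le\ P(F;B_r(x_0))\ +\ C\,r^{\,n\beta}, \qquad\text{whenever}\ F\Delta E\subset\subset B_r(x_0),\ r<r_0. \]
A direct computation with $\beta=\frac{\alpha+(n-1)(1-\alpha)}{\alpha+n(1-\alpha)}$ shows that $n\beta=n-1+\frac{\alpha}{n-(n-1)\alpha}$, so $E$ is an almost-minimizer of the perimeter with deviation $Cr^{\,n-1+2\sigma}$, where $\sigma=\frac{\alpha}{2(n-(n-1)\alpha)}=\frac{\alpha}{2n(1-\alpha)+2\alpha}\in(0,\tfrac12)$, equal to $\frac{\alpha}{4-2\alpha}$ when $n=2$.

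At this point the classical regularity theory for almost-minimizers (see~\cite{Tam,Amb}) applies and yields the conclusion: $\partial^*E$ is a hypersurface of class $C^{1,\sigma}$ outside a closed singular set of Hausdorff dimension at most $n-8$ (empty when $n\le7$, in particular when $n=2$, in which case $\partial E$ is a curve of class $C^{1,\alpha/(4-2\alpha)}$), and moreover $E$ satisfies the two-sided density estimates $|E\cap B_r(x_0)|\ge cr^n$ and $|B_r(x_0)\setminus E|\ge cr^n$ for every $x_0\in\partial E$ and every small $r$, which force $\partial^*E=\partial E$ and the porosity of $E$. For the case in which $f$ is only bounded between two positive constants, Theorem~\ref{oldepsepsbeta} is no longer available, but the same scheme can be run with the crude adjustment cost $C|\eps|^{(n-1)/n}$ in place of $C|\eps|^\beta$: adding to, respectively removing from, the isoperimetric set a small ball centred at a density-one point of $\R^n\setminus E$, respectively of $E$, shows that the isoperimetric profile satisfies $|I(v)-I(v')|\le C|v-v'|^{(n-1)/n}$, and this leads to a quasi-minimality inequality of the form $P_f(E;B_r)\le c_*\,P_f(F;B_r)+Cr^{n-1}$ with $c_*=\|f\|_\infty/\min f$. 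This is still enough for the two-sided density estimates, hence for $\partial^*E=\partial E$ and for the porosity of $E$, but not for $C^1$ regularity, the multiplicative constant $c_*$ being no longer close to $1$.

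The step I expect to be the main obstacle is the passage carried out in the second paragraph, namely turning the \emph{constrained, weighted} near-minimality furnished by the isoperimetric property and Theorem~\ref{oldepsepsbeta} into a clean \emph{unconstrained Euclidean} almost-minimality with the \emph{sharp} exponent. The oscillation factor $1+Cr^\alpha$ coming from the variation of $f$ has to be traded for an additive error via the density upper bound (which itself must be bootstrapped from the very same inequality), and one must then check that the exponent that survives is the volume-adjustment exponent $n\beta$, not the H\"older exponent $n-1+\alpha$; this is precisely the arithmetic that produces $\sigma=\frac{\alpha}{2n(1-\alpha)+2\alpha}$.
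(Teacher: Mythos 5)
Your proposal is correct and follows essentially the same route that the paper attributes to~\cite{CP1} for this result: use the $\eps-\eps^\beta$ property (Theorem~\ref{oldepsepsbeta}), transplanted through a fixed far-away ball, to upgrade the isoperimetric property to an unconstrained Euclidean almost-minimality with deviation $Cr^{n\beta}=Cr^{n-1+2\sigma}$, $\sigma=\frac{\alpha}{2n(1-\alpha)+2\alpha}$, and then invoke the classical regularity theory of~\cite{Tam,Amb}, with the cruder $\eps-\eps^{(n-1)/n}$ adjustment yielding quasiminimality, density estimates, $\partial^*E=\partial E$ and porosity when $f$ is merely bounded. The only cosmetic point is your reduction of the case $\alpha=1$ to Theorem~\ref{veryold}, which as stated covers Lipschitz densities only for $n=2$; this is harmless, since your main argument runs verbatim for $\alpha=1$ and gives exactly the claimed exponent $\sigma=\tfrac12$.
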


The main result of the present paper is the following stronger regularity result for the bi-dimensional case.

\maintheorem{main}{Regularity of isoperimetric sets}{Let $f:\R^2\to (0,+\infty)$ be a ${\rm C}^{0,\alpha}$ density, for some $0<\alpha\leq 1$. Then, every isoperimetric set $E$ has a boundary of class ${\rm C}^{1,\frac\alpha{3-2\alpha}}$.}

It is worthy observing that the regularity obtained above is still less than the ${\rm C}^{1,\alpha}$ regularity that one could expect just by looking at Theorem~\ref{veryold}, but it is better than the previously known regularity given by Theorem~\ref{old}. In particular, notice that there is a substantial improvement between ${\rm C}^{1,\frac \alpha{4-2\alpha}}$ and ${\rm C}^{1,\frac \alpha{3-2\alpha}}$. Indeed, the second exponent is not just merely bigger than the first one, there is also a much deeper difference: namely, when $\alpha$ goes to $1$, the first exponent goes to $1/2$, while the second goes to $1$. In particular, our Theorem~\mref{main} gives also a proof that for $f$ Lipschitz an isoperimetric set is ${\rm C}^{1,1}$, as stated in Theorem~\ref{veryold}.

We conclude this introduction with a couple of remarks. First of all, the fact that the regularity of the ``old'' Theorem~\ref{old} does never exceed ${\rm C}^{1,\frac 12}$ is not strange, since this is the best regularity that can be obtained via the classical methods, not really using the (non-local) isoperimetric property of a set $E$, but only the weaker (local) fact that $E$ is an $\omega$-minimizer of the perimeter. To get anything better than ${\rm C}^{1,\frac 12}$, one has really to use the non-locality of the fact that $E$ is an isoperimetric set, as we do in the present paper by using the non-local $\eps-\eps^\beta$ property. The second remark is about the sharp regularity exponent that one can obtain for an isoperimetric set with a ${\rm C}^{0,\alpha}$ density: we do not believe that our exponent of Theorem~\mref{main} is sharp, but we are also not sure whether it is possible to reach the ${\rm C}^{1,\alpha}$ regularity, similarly to what happens for $k\geq 1$ in the classical case.

\subsection{Notation}
Let us briefly present the notation of the present paper. The density will always be denoted by $f:\R^2\to (0,+\infty)$; keep in mind that, since we want to prove Theorem~\mref{main}, the function $f$ will always be at least continuous. For any set $E\subseteq \R^2$, we call $V_f(E)$ and $P_f(E)$ its volume and perimeter. For any $z\in\R^2$ and $\rho>0$, we call $B_\rho(z)$ the ball centered at $z$ with radius $\rho$. Given two points $x,\,y\in\R^2$, we denote by $xy,\, \ell(xy)$, and $\ell_f(xy)$ the segment connecting the two points, its Euclidean length, and its length with respect to the density $f$ (that is, $\int_{xy} f(t) dt$). Given three points $a,\,b,\, c$, we will denote by $\angle abc$ the angle between the segments $ab$ and $bc$. Let now $E\subseteq \R^2$ be an isoperimetric set; then, by Theorem~\ref{old} we already know that the boundary of $E$ is of class ${\rm C}^{1,\frac \alpha{4-2\alpha}}$, hence in particular it is locally Lipschitz. As a consequence, for any two points $x,\,y$ which belong to the same connected component of $\partial E$ and which are very close to each other (with respect to the diameter of this connected component), the shortest curve in $\partial E$ connecting $x$ and $y$ is well-defined. We denote by $\arc{xy}$ this curve, and again by $\ell(\arc{xy})$ and $\ell_f(\arc{xy})$ we denote its Euclidean length, and its length with respect to the density $f$. The letter $C$ is always used to denote a large constant, which can increase from line to line, while $M$ is a fixed constant, coming from the $\alpha$-H\"older property.

\section{Proof of the main result}

This section is devoted to the proof of our main result, Theorem~\mref{main}. Most of the proof consists in studying the situation around few given points, so let us fix some useful particular notation; Figure~\ref{figure} helps with the names of the points. Let us fix an isoperimetric set $E$ and a point $z$ on $\partial E$. Let $\rho\ll 1$ be a very small constant, much smaller than the length of the connected component $\gamma\subseteq\partial E$ containing $z$, and of the distance between $z$ and the other connected components of $\partial E$ (if any). Since we know that $\gamma$ is a ${\rm C}^{1,\frac{\alpha}{4-2\alpha}}$ curve, we can fix arbitrarily an orientation on it; hence, let us define four points $x,\, \bar x,\,y,\,\bar y$ in $\partial B_\rho(z)$ as follows: among the points of $\gamma$ which belong to $\partial B_\rho(z)$ and which are \emph{before} $z$, we call $\bar x$ the closest one to $z$, and $x$ the farthest one (in the sense of the paremeterization of $\gamma$). We define analogously $\bar y$ and $y$ \emph{after} $z$: of course, $\bar x$ and $x$ may coincide, as well as $y$ and $\bar y$. Moreover, we call $\delta$ the angle $\angle zxy$, and
\[
l:= \ell(\arc{x\bar x})+ \ell(\arc{y\bar y})\,.
\]
Finally, we introduce the following set $F$, which we will use as a competitor to $E$ (after adjusting its area).
\begin{figure}[thbp]
\input{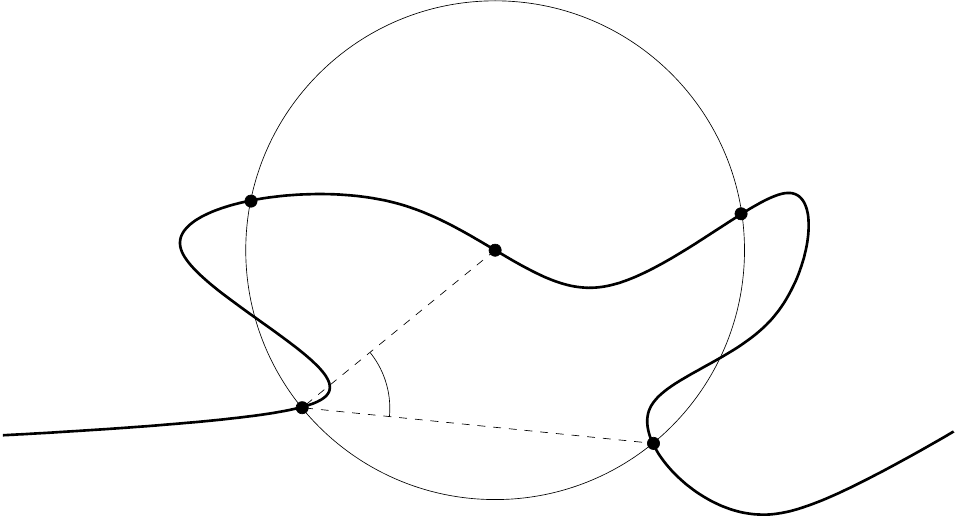_t}
\caption{Position of the points for Lemma~\ref{generic}.}\label{figure}
\end{figure}
\begin{definition}\label{defF}
With the above notation, we let $F\subseteq \R^2$ be the bounded set whose boundary is $\partial F = \partial E \setminus \arc{xy} \cup xy$.
\end{definition}
It is to be noticed that the above definition makes sense: indeed, by~\cite[Theorem~1.1]{CP1} we already know that $E$ is bounded, and by construction the segment $xy$ cannot intersect any point of $\partial E \setminus \arc{xy}$. In particular, all the connected components of $E$ whose boundary is not $\gamma$ belong also to $F$; instead, the connected component of $E$ with $\gamma$ as boundary has been slightly changed near $z$. Recalling that $f$ is real-valued and strictly positive, as well as $\alpha$-H\"older, we can find a constant $M$ such that
\begin{align}\label{genass}
\frac 1M \leq f(p)\leq M\,, && |f(p)-f(q)| \leq M |p-q|^\alpha
\end{align}
for every $p,\, q$ in some neighborhood of $E$, containing all the points that we are going to use in our argument: this is not a problem, since all our arguments will be local. Let us now give the first easy estimates about the above quantities.
\begin{lemma}\label{generic}
With the above notation, one has
\begin{align}\label{easyfirst}
l \leq C\rho^{\frac 2{2-\alpha}}\,, &&
\delta \leq C\rho^{\frac \alpha{4-2\alpha}}\,,
\end{align}
and moreover
\begin{equation}\label{esticirc}
\ell_f(\arc{xy}) - \ell_f(xy) \ll \ell(xy)\,.
\end{equation}
\end{lemma}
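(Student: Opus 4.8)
The plan is to prove the three estimates in order, using the $\eps$-$\eps^\beta$ property of Theorem~\ref{oldepsepsbeta} together with the $C^{1,\frac\alpha{4-2\alpha}}$ regularity already at our disposal from Theorem~\ref{old}. The fundamental idea for the first two estimates is a competitor comparison: the set $F$ of Definition~\ref{defF} is obtained from $E$ by cutting off the arc $\arc{xy}$ and replacing it with the chord $xy$. One has $V_f(F)\neq V_f(E)$ in general, but the volume difference is controlled by the area enclosed between $\arc{xy}$ and $xy$, which sits inside $B_\rho(z)$ and so is $O(\rho^2)$; hence we can apply Theorem~\ref{oldepsepsbeta} to adjust $F$ into a set $\widetilde E$ with $V_f(\widetilde E)=V_f(E)$ at a perimeter cost of $C|\eps|^\beta$ with $\eps=O(\rho^2)$ and $\beta=\frac1{2-\alpha}$, so the cost is $O(\rho^{\frac2{2-\alpha}})$. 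Comparing $P_f(\widetilde E)$ with $P_f(E)$ and using minimality of $E$ gives
\begin{equation*}
\ell_f(\arc{xy}) \leq \ell_f(xy) + C\rho^{\frac2{2-\alpha}}\,.
\end{equation*}

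First I would extract the bound on $l$. The two sub-arcs $\arc{x\bar x}$ and $\arc{y\bar y}$ are pieces of $\partial E$ joining points on $\partial B_\rho(z)$, and each of them, together with a chord, bounds a small region; moreover they lie ``outside'' the chord $xy$ in the sense that going from $\arc{xy}$ to $xy$ strictly shortens the $f$-length of the $x$-to-$y$ portion of $\partial E$ by \emph{at least} $\frac1M\bigl(\ell(\arc{x\bar x})+\ell(\arc{y\bar y})\bigr)$ minus the negligible contribution of $\arc{\bar x\bar y}$ — here one uses that $\arc{x\bar x}$ and $\arc{y\bar y}$ are roughly radial detours of total Euclidean length at least $c\,l$ beyond what the chord costs, while $\arc{\bar x\bar y}$ is nearly the chord itself up to the $C^{1}$-smallness of the curve. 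Since $f\geq 1/M$, the left-hand side $\ell_f(\arc{xy})-\ell_f(xy)$ is bounded below by $c\,l$ for a dimensional constant $c>0$; combined with the competitor inequality above this yields $l\leq C\rho^{\frac2{2-\alpha}}$, which is the first estimate in~\eqref{easyfirst}.

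Next I would bound $\delta$. The angle $\delta=\angle zxy$ measures how much the chord direction $xy$ deviates from the direction of $\partial E$ at $x$ (or near $x$). Since $\partial E$ is $C^{1,\frac\alpha{4-2\alpha}}$, its unit tangent oscillates by at most $C\ell(\arc{xy})^{\frac\alpha{4-2\alpha}}\le C\rho^{\frac\alpha{4-2\alpha}}$ along $\arc{xy}$, and the chord direction is an average of these tangent directions; hence $\delta\le C\rho^{\frac\alpha{4-2\alpha}}$, giving the second estimate in~\eqref{easyfirst}. (One must be slightly careful that $\arc{xy}$ is the full arc from $x$ to $y$, not just from $\bar x$ to $\bar y$, but its Euclidean diameter is still $\le 2\rho$, so the Hölder modulus bound applies with the same power of $\rho$.)

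Finally, for~\eqref{esticirc} I would argue that $\ell_f(\arc{xy})-\ell_f(xy)$ is in fact much smaller than $\ell(xy)$. Split it as $\bigl(\ell_f(\arc{xy})-\ell(\arc{xy})\,\text{-type terms}\bigr)$; more directly, write $\ell_f(\arc{xy})-\ell_f(xy)\le\ell_f(\arc{xy})-\frac1M\ell(xy)$ is the wrong direction, so instead I use the competitor inequality again: $\ell_f(\arc{xy})-\ell_f(xy)\le C\rho^{\frac2{2-\alpha}}$, while $\ell(xy)\ge c\rho$ (the chord has Euclidean length comparable to $\rho$, since $x,y\in\partial B_\rho(z)$ and, by the $C^1$ bound, $\delta$ is small so $x$ and $y$ are not nearly antipodal nor nearly coincident — here one needs the mild nondegeneracy that $\arc{x\bar x}$ and $\arc{y\bar y}$ are short compared to $\rho$, which follows from $l\le C\rho^{\frac2{2-\alpha}}\ll\rho$). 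Since $\frac2{2-\alpha}>1$ for every $\alpha\in(0,1]$, we get $\rho^{\frac2{2-\alpha}}\ll\rho\le C\ell(xy)$, hence $\ell_f(\arc{xy})-\ell_f(xy)\ll\ell(xy)$, which is~\eqref{esticirc}.

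The main obstacle is the lower bound $\ell_f(\arc{xy})-\ell_f(xy)\geq c\,l$ used in the first step: one has to show that replacing the arc by the chord really recovers a definite fraction of the ``wasted'' length $l$, and that the sub-arc $\arc{\bar x\bar y}$ — which is \emph{not} replaced and which could a priori be longer than the chord $\bar x\bar y$ — does not spoil the inequality. This is where the $C^1$ control near $z$ (small $\delta$, hence $\arc{\bar x\bar y}$ close to its chord) and the definition of $x,\bar x,y,\bar y$ as the extreme intersection points with $\partial B_\rho(z)$ are both essential, and it is the only place where a genuinely geometric (as opposed to purely arithmetic) argument is needed.
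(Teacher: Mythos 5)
Your overall strategy (cut the arc, replace it by the chord, restore the volume far away via Theorem~\ref{oldepsepsbeta} at cost $C|\eps|^{\frac1{2-\alpha}}$, and then play the resulting upper bound on $\ell_f(\arc{xy})-\ell_f(xy)$ against a lower bound involving $l$) is exactly the paper's, and your deduction of~\eqref{esticirc} from it coincides with the paper's. Your derivation of the $\delta$-bound from the ${\rm C}^{1,\frac{\alpha}{4-2\alpha}}$ regularity of Theorem~\ref{old} (chord direction within the tangent oscillation) is a legitimate alternative: the paper instead extracts $\delta^2\rho\le C\rho^{\frac2{2-\alpha}}$ from the very same competitor inequality, which is why it needs no separate argument.

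The genuine gap is in the step you yourself single out as the main obstacle: the asserted lower bound $\ell_f(\arc{xy})-\ell_f(xy)\ge c\,l$ is false in general. The middle arc $\arc{\bar x\bar y}$ passes through the center $z$, so its Euclidean length is at least $2\rho\ge\ell(xy)$, but its \emph{weighted} length can be smaller than $\ell_f(xy)$, because $f$ may oscillate by as much as $2M\rho^\alpha$ inside $B_\rho(z)$. The correct inequality (as in the paper, via $\ell_f(\arc{\bar x\bar y})\ge 2\rho f_{\rm min}$, $\ell_f(xy)\le 2\rho\cos\delta\, f_{\rm max}$) is $\ell_f(\arc{xy})-\ell_f(xy)\ge \frac{2(1-\cos\delta)}{M}\rho+\frac lM-4M\rho^{1+\alpha}$, and the conclusion $l\le C\rho^{\frac2{2-\alpha}}$ survives only because of the exponent check $1+\alpha\ge\frac2{2-\alpha}$, which you never make. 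Moreover you attribute the negligibility of the middle-arc comparison to the ``${\rm C}^1$-smallness of the curve'', which is the wrong mechanism: made quantitative via the known ${\rm C}^{1,\frac{\alpha}{4-2\alpha}}$ modulus, that route produces an error of order $\rho^{1+\frac{\alpha}{4-2\alpha}}$, and since $1+\frac{\alpha}{4-2\alpha}<\frac2{2-\alpha}$ this would only yield the weaker bound $l\le C\rho^{1+\frac{\alpha}{4-2\alpha}}$, not the stated one; the error must instead be traced to the H\"older oscillation of $f$, giving $\rho^{1+\alpha}$. A minor related inaccuracy: $E\Delta F$ does \emph{not} sit inside $B_\rho(z)$, since $\arc{x\bar x}$ and $\arc{y\bar y}$ may exit the ball; one either keeps the $(\rho+l/2)$-bound as in the paper (hence the $(\rho^2+l^2)$ term) or first justifies $l\le C\rho$ qualitatively from the ${\rm C}^1$ regularity. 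This is fixable, but as written your volume estimate and hence the competitor inequality are not fully justified either.
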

\begin{proof}
First of all, let us consider a point $p\in E\Delta F$ in the symmetric difference between $E$ and $F$; by construction, either $p$ belongs to the ball $B_\rho(z)$, or it has a distance at most $l/2$ from that ball. As a consequence, $p$ has distance at most $\rho+l/2$ from $z$, and this gives
\begin{equation}\label{put0}
|V_f(E)-V_f(F)| \leq V_f(E\Delta F) \leq M\big(\pi (\rho+l/2)^2\big) \leq 2M\pi (\rho^2 + l^2)\,.
\end{equation}
Let us now apply Theorem~\ref{oldepsepsbeta} to the set $E$, with a ball $B$ intersecting $\partial E$ far away from the point $z$. We get a constant $\bar\eps$ and of course, up to have taken $\rho$ small enough, we can assume that $|\eps|\leq \bar \eps$, being $\eps = V_f(E)-V_f(F)$. Then, Theorem~\ref{oldepsepsbeta} provides us with a set $\widetilde E$ satisfying~(\ref{propepepbe}); as a consequence, if we define $\widetilde F= F \setminus B \cup (B\cap \widetilde E)$, we get $V_f(\widetilde F) = V_f(F) + V_f(\widetilde E) - V_f(E) = V_f(E)$, and then, since $E$ is isoperimetric, by~(\ref{put0}) we get
\[
P_f(E)\leq P_f(\widetilde F) = P_f(F) + P_f(\widetilde E) - P_f(E) 
\leq P_f(F)+ C \big(\rho^2+l^2\big)^{\frac 1{2-\alpha}}\,,
\]
which implies
\begin{equation}\label{put1}
\ell_f(\arc{xy}) - \ell_f(xy)= P_f(E) -P_f(F) \leq C\big(\rho^2+l^2\big)^{\frac 1{2-\alpha}}\,.
\end{equation}
Let us now evaluate the term $\ell_f(\arc{xy}) - \ell_f(xy)$: if we call $f_{\rm min}$ and $f_{\rm max}$ the minimum and the maximum of $f$ inside $B_z(\rho)$, by~(\ref{genass}) we have
\begin{align*}
f_{\rm max} \leq f_{\rm min} + 2M\rho^\alpha\,, && f_{\rm min}\geq \frac 1M\,,
\end{align*}
and then
\[\begin{split}
\ell_f(\arc{xy})-\ell_f(xy)
&= \ell_f(\arc{\overline x \,\overline y})+\ell_f(\arc{x\bar x}) +\ell_f(\arc{y\bar y})-\ell_f(xy)
\geq  2\rho f_{\rm min}+\frac l M - 2\rho \cos(\delta) f_{\rm max} \\
&\geq 2\rho f_{\rm min}+\frac l M - 2\rho \cos(\delta) \big(f_{\rm min} + 2M\rho^\alpha\big)
\geq 2\,\frac{1-\cos\delta}M\, \rho +\frac l M - 4M \rho^{\alpha+1}\,.
\end{split}\]
Inserting this estimate in~(\ref{put1}), we get then
\[
2\,\frac{1-\cos\delta}M\, \rho +\frac l M  \leq C\big(\rho^2+l^2\big)^{\frac 1{2-\alpha}}+4M \rho^{\alpha+1}\,.
\]
Since 
\begin{align*}
\frac 2{2-\alpha} > 1\,, && \alpha+1 \geq \frac 2{2-\alpha}\,,
\end{align*}
we immediately derive first that $\delta$ is very small, so that $1-\cos\delta \geq \delta^2/3$, and then
\[
\delta^2 \rho + l \leq C \rho^{\frac 2{2-\alpha}}\,.
\]
This gives the validity of both the inequalities in~(\ref{easyfirst}). Finally, (\ref{put1}) together with~(\ref{easyfirst}) and the fact that, since $\delta\ll 1$, one has $\ell(xy)\approx 2\rho$, gives~(\ref{esticirc}).
\end{proof}
\begin{corollary}
For any two points $a,\,b\in \gamma$ sufficiently close to each other, one always has
\begin{align}\label{esticirca}
\ell_f(\arc{ab}) - \ell_f(ab) \ll \ell_f(ab)\,, &&
\ell(\arc{ab}) - \ell(ab) \ll \ell(ab)\,.
\end{align}
\end{corollary}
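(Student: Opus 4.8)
The plan is to reduce the corollary to Lemma~\ref{generic}, and precisely to estimate~\eqref{esticirc}: the point is that every pair of sufficiently close points $a,b\in\gamma$ can be realised as the pair $x,y$ appearing in that lemma, for a suitable centre $w$ and radius $\rho$. To produce the centre, I would parameterise $\gamma$ by arc length with $a=\gamma(0)$ and $b=\gamma(T)$, $T>0$ small, and note that $g(s):=|\gamma(s)-a|^2-|\gamma(s)-b|^2$ satisfies $g(0)=-|a-b|^2<0$ and $g(T)=|a-b|^2>0$; the intermediate value theorem then gives $w_0\in(0,T)$ with $g(w_0)=0$, so that $w:=\gamma(w_0)$ is an interior point of $\arc{ab}$ with $|w-a|=|w-b|=:\rho$. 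Since $2\rho=|w-a|+|w-b|\le\ell(\arc{aw})+\ell(\arc{wb})=\ell(\arc{ab})$, the radius $\rho$ is as small as we wish once $a$ and $b$ are close, and Lemma~\ref{generic} applies with $w$ in the role of $z$ and with this $\rho$ (its estimates can be taken uniform as the centre varies on $\gamma$, using finitely many choices of the auxiliary ball in the proof of that lemma).

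The main step is to check that, for $a,b$ close enough, the four points $x,\bar x,y,\bar y$ produced by the construction preceding Definition~\ref{defF} (with $w$ in place of $z$) are simply $x=\bar x=a$ and $y=\bar y=b$; granting this, $\arc{xy}=\arc{ab}$ and $\ell(xy)=\ell(ab)$, so~\eqref{esticirc} reads exactly $\ell_f(\arc{ab})-\ell_f(ab)\ll\ell(ab)$. For this identification I would invoke the uniform ${\rm C}^1$ structure of $\gamma$: by Theorem~\ref{old}, $\gamma$ is a compact embedded curve of class ${\rm C}^{1,\frac\alpha{4-2\alpha}}$, hence there is $r_0>0$, depending only on $\gamma$, such that for every $p\in\gamma$ the set $\gamma\cap B_{r_0}(p)$ is a connected graph $u\mapsto p+u\tau+\phi(u)\nu$ over the tangent line at $p$, with $\phi(0)=\phi'(0)=0$ and, after shrinking $r_0$ if needed, $\|\phi'\|_\infty<1$. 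If $a,b$ are so close that $\ell(\arc{ab})<r_0$, then $\arc{ab}\subset B_{r_0}(w)$, so $\gamma\cap B_{r_0}(w)$ is a single graph arc containing $\arc{ab}$, and along it $|\gamma-w|^2=u^2+\phi(u)^2$ has derivative $2u+2\phi(u)\phi'(u)$, which has the sign of $u$ because $|\phi(u)|\le|u|\,\|\phi'\|_\infty$. Thus $u\mapsto|\gamma(u)-w|$ is strictly decreasing and then strictly increasing, vanishing only at $w$; since every point of $\gamma$ lying outside this graph arc has distance at least $r_0>\rho$ from $w$, the equation $|q-w|=\rho$ for $q\in\gamma$ has exactly one solution before $w$ and one after $w$, and these must be $a$ and $b$. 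I expect this verification — excluding that $\gamma$ re-enters $B_\rho(w)$ away from $\arc{ab}$, uniformly in $a,b$ — to be the only genuinely non-routine point of the argument.

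Finally, the two displayed inequalities follow from $\ell_f(\arc{ab})-\ell_f(ab)\ll\ell(ab)$ by means of~\eqref{genass}. The first is immediate, since $\ell_f(ab)\ge\ell(ab)/M$. For the second, let $m$ and $m'$ be the minimum and the maximum of $f$ on a disc of radius $C\rho$ containing $\arc{ab}\cup ab$ (recall that $\ell(\arc{ab})$ is comparable to $\rho$): then $m'-m\le2M(C\rho)^\alpha$ and $m\ge1/M$, so, using $m\,\ell(\arc{ab})\le\ell_f(\arc{ab})$ and $\ell_f(ab)\le m'\,\ell(ab)$, one obtains $m\,(\ell(\arc{ab})-\ell(ab))\le(\ell_f(\arc{ab})-\ell_f(ab))+2M(C\rho)^\alpha\ell(ab)\ll\ell(ab)$, whence $\ell(\arc{ab})-\ell(ab)\ll\ell(ab)$. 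In all these occurrences of ``$\ll$'' the relevant small parameter is $\rho$, which is comparable to both $\ell(ab)$ and $\ell(\arc{ab})$ because $\angle wab=\delta\to0$ by~\eqref{easyfirst}.
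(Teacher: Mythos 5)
Your argument is correct and follows the same core strategy as the paper: pick a point $w\in\arc{ab}$ equidistant from $a$ and $b$ (the paper calls it $z$, with $\rho=\ell(az)=\ell(zb)$), and reduce the claim to Lemma~\ref{generic} via estimate~\eqref{esticirc}; the final passage from $\ell_f(\arc{ab})-\ell_f(ab)\ll\ell(ab)$ to the two inequalities in~\eqref{esticirca} is also exactly the (implicit) computation the paper has in mind when it says the second estimate follows from the continuity of $f$. The one place where you diverge is the step you yourself flag as the only non-routine point: you prove the exact identification $x=\bar x=a$, $y=\bar y=b$, ruling out re-entries of $\gamma$ into $B_\rho(w)$ by means of a uniform ${\rm C}^1$ graph radius $r_0$ for the compact embedded curve $\gamma$ and a monotonicity argument for $u\mapsto|\gamma(u)-w|$. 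That is valid (and the uniform graph property does hold for a compact embedded ${\rm C}^1$ curve), but it is more than is needed: the paper sidesteps the identification entirely by observing that, whatever the intersection points are, one has $a\in\arc{x\bar x}$ and $b\in\arc{\overline y y}$, so $\ell(a\bar x)+\ell(b\bar y)\leq l\ll\rho$ by~\eqref{easyfirst}, whence $\ell_f(ab)\approx\ell_f(xy)$ and
\[
\ell_f(\arc{ab})-\ell_f(ab)\leq \big(\ell_f(\arc{xy})-\ell_f(xy)\big)+\big(\ell_f(xy)-\ell_f(ab)\big)\ll\ell(ab)\,.
\]
So the paper's route buys a shorter proof that never needs the auxiliary graph-radius fact or uniqueness of the circle intersections, while yours buys the cleaner statement $\arc{xy}=\arc{ab}$ at the cost of an extra geometric lemma; both are sound, and your remark about uniformity of the constants in Lemma~\ref{generic} as the centre varies is a legitimate point that the paper leaves implicit as well.
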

\begin{proof}
Let $z\in \arc{ab}$ be a point such that $\rho=\ell(az)=\ell(zb)$, and let us call $x,\,\bar x,\, y$ and $\bar y$ as before. Of course, $a\in\arc{x\bar x}$ and $b\in \arc{\overline y y}$, so that $\ell(a\bar x)+\ell(b\bar y)\leq l$. Since  by~(\ref{easyfirst}) we have $l\ll \rho$, we get $\angle az{\bar x}\ll 1$ and $\angle bz{\bar y}\ll 1$, as well as $\ell_f(ab)\approx \ell_f(xy)$. Hence, (\ref{esticirc}) gives us
\[
\ell_f(\arc{ab}) - \ell_f(ab)  \leq 
\ell_f(\arc{xy}) - \ell_f(xy)+\ell_f(xy) -\ell_f(ab)
\ll \ell(ab)\,,
\]
and then the first estimate in~(\ref{esticirca}) is established. The second one immediately follows, just thanks to the continuity of $f$.
\end{proof}

An argument similar to the one proving Lemma~\ref{generic} gives then the following estimate.
\begin{lemma}\label{applynow}
Given any two sufficiently close points $r,\, s\in \gamma$, one has
\begin{equation}\label{toprove}
\ell_f(\arc{rs}) - \ell_f(rs) \geq -12M^5\, \ell(rs)^{\frac{2+\alpha}{2-\alpha}}\,.
\end{equation}
\end{lemma}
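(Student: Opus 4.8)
The plan is to parametrize the short arc $\arc{rs}$ as a graph over its chord, and then compare the two $f$-lengths by a pointwise inequality in which the nonnegative Euclidean \emph{excess} of the arc pays for the oscillation of $f$; this is exactly ``an argument similar to Lemma~\ref{generic}'', since again one splits $\ell_f(\arc{\cdot})-\ell_f(\cdot)$ into a purely metric term and a H\"older error, the difference being that here the metric term is kept rather than discarded.

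First, set $d=\ell(rs)$. Since $\partial E$ is of class ${\rm C}^1$ by Theorem~\ref{old} and, by~(\ref{esticirca}), $\ell(\arc{rs})-\ell(rs)\ll d$, for $r,s$ close enough the arc $\arc{rs}$ is a graph $y=\phi(x)$, $x\in[0,d]$, over the segment $rs$, which I take as the $x$-axis with $r=(0,0)$ and $s=(d,0)$; thus $\phi(0)=\phi(d)=0$ and $\|\phi'\|_\infty\ll1$. Then
\[
\ell_f(\arc{rs})-\ell_f(rs)=\int_0^d\Bigl(f(x,\phi(x))\sqrt{1+\phi'(x)^2}-f(x,0)\Bigr)\,dx .
\]
Writing the integrand as $f(x,0)\bigl(\sqrt{1+\phi'^2}-1\bigr)+\bigl(f(x,\phi(x))-f(x,0)\bigr)\sqrt{1+\phi'^2}$ and using $f\ge 1/M$, $|f(x,\phi)-f(x,0)|\le M|\phi|^\alpha$, $\sqrt{1+\phi'^2}-1\ge\phi'^2/3$ and $\sqrt{1+\phi'^2}\le2$ (the last two because $|\phi'|\ll1$), one gets
\[
f(x,\phi(x))\sqrt{1+\phi'(x)^2}-f(x,0)\ \ge\ \frac{\phi'(x)^2}{3M}-2M|\phi(x)|^\alpha ,
\]
hence $\ell_f(\arc{rs})-\ell_f(rs)\ge\frac1{3M}\int_0^d\phi'^2-2M\int_0^d|\phi|^\alpha$.

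The final step absorbs the negative term into the positive one. Since $\phi(0)=\phi(d)=0$, Cauchy--Schwarz gives $|\phi(x)|\le(\min\{x,d-x\})^{1/2}\|\phi'\|_{L^2(0,d)}$, so that $\int_0^d|\phi|^\alpha\le \|\phi'\|_{L^2}^{\alpha}\,d^{1+\alpha/2}$ (the numerical constant being $\le1$ for $0<\alpha\le1$). Putting $u=\|\phi'\|_{L^2}^2$, the lower bound reads $\frac{u}{3M}-2Md^{1+\alpha/2}u^{\alpha/2}$, and Young's inequality with conjugate exponents $2/\alpha$ and $2/(2-\alpha)$ absorbs half of $\frac{u}{3M}$, leaving $-\,c_{M,\alpha}\,d^{(2+\alpha)/(2-\alpha)}$ with $c_{M,\alpha}\le(2M)^{2/(2-\alpha)}(3M)^{\alpha/(2-\alpha)}\le 12M^5$ (using $M\ge1$, $0<\alpha\le1$); this is precisely~(\ref{toprove}). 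Note that the power $(2+\alpha)/(2-\alpha)$ is forced by the scaling of the optimization: the competition between $u$ and $u^{\alpha/2}d^{1+\alpha/2}$ balances at $u\sim d^{(2+\alpha)/(2-\alpha)}$.

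The conceptual heart, and the step I expect to be the genuine obstacle, is this last one: realizing that the nonnegative ``elastic'' term $\int_0^d\phi'^2$ (the squared slope of the arc, usually thrown away in crude estimates as in Lemma~\ref{generic}) is exactly what controls the oscillation error $\int_0^d|\phi|^\alpha$, and that the Poincar\'e-type inequality produces the sharp exponent. A subsidiary technical point, to be handled carefully, is the rigorous justification that $\arc{rs}$ is a graph over its chord with small slope and vanishing endpoint heights: this relies on the ${\rm C}^1$ regularity of $\partial E$ furnished by Theorem~\ref{old}, not merely on the length estimate~(\ref{esticirca}).
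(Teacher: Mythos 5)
Your proof is correct, and it takes a genuinely different route from the one in the paper. The paper's argument is purely geometric and ``one-point'': it calls $t$ the maximal distance from the arc to the chord, uses the crude global inequality $\ell(\arc{rs}) \ge 2\sqrt{d^2+t^2}$ (with $2d=\ell(rs)$) together with the corresponding $g$-length comparison to get the lower bound $\frac{2}{M}(\sqrt{d^2+t^2}-d) - 4M^2 t^\alpha d$, and then closes by a simple dichotomy on the sign of $\frac{2t^2}{3Md}-4M^2 t^\alpha d$, which either gives positivity or forces $t\le 6M^3 d^{2/(2-\alpha)}$. You instead work locally: you write the arc as a graph $y=\phi(x)$ over the chord, split the integrand pointwise into the ``elastic'' term $f(x,0)(\sqrt{1+\phi'^2}-1)\gtrsim \phi'^2$ and the H\"older error $\lesssim |\phi|^\alpha$, and then invoke a Poincar\'e inequality with vanishing boundary values to dominate $\int|\phi|^\alpha$ by a power of $\int\phi'^2$, finishing with Young's inequality. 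The exponent $\frac{2+\alpha}{2-\alpha}$ emerges in both cases from the same scaling competition, but the mechanism is different: the paper balances a single worst-case height against the H\"older penalty, while you balance a Dirichlet energy against an $L^\alpha$ norm. Your variational version is somewhat more robust (it uses the whole excess, not just the extremal point) and makes the scaling transparent; the paper's version is more elementary, avoiding the graph parametrization and the Sobolev-type inequality. One point worth keeping explicit if you wrote this up: the graph property and the bound $\|\phi'\|_\infty\ll 1$ really do require the $C^{1,\beta}$ regularity of Theorem~\ref{old} (H\"older continuity of the tangent) and not just the length estimate~(\ref{esticirca}), as you correctly flag at the end.
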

\begin{proof}
Let us call $t$ the maximal distance between points of the arc $\arc{rs}$ and the segment $rs$, and let us denote $2d=\ell(rs)$ for brevity. Of course, concerning the Euclidean distances, one has
\begin{equation}\label{estEuc}
\ell(\arc{rs}) \geq 2\sqrt{d^2+t^2}\,.
\end{equation}
On the other hand, let us call $\pi$ the projection of $\R^2$ on the line containing the segment $rs$, so that for every $a\in\arc{rs}$ one has $|a-\pi(a)|\leq t$ by definition; moreover, define the density $g:\R^2\to\R^+$ as $g(a)=f(\pi(a))$, so that by the $\alpha$-H\"older property~(\ref{genass}) of $f$ we get
\begin{equation}\label{est2}
f(a) \geq g(a) - M t^\alpha
\end{equation}
for every $a\in\arc{rs}$. Since $\pi(a)=a$ for every $a\in rs$, by~(\ref{estEuc}) and~(\ref{genass}) we can then easily evaluate
\[
\ell_g(\arc{rs}) - \ell_f(rs) = \ell_g(\arc{rs}) - \ell_g(rs) \geq \frac 2M \,\big(\sqrt{d^2+t^2}-d\big)\,.
\]
On the other hand, by~(\ref{est2}) and by~(\ref{esticirca}) we have also
\[
\ell_f(\arc{rs}) - \ell_g(\arc{rs}) \geq - M t^\alpha \ell_f(\arc{rs}) \geq - 2M t^\alpha \ell_f(rs)
\geq - 4M^2 t^\alpha d\,.
\]
Putting together the last two estimates, we obtain then
\[
\ell_f(\arc{rs}) - \ell_f(rs) = \ell_g(\arc{rs})-\ell_f(rs)+\ell_f(\arc{rs}) - \ell_g(\arc{rs})
\geq  \frac 2M \,\big(\sqrt{d^2+t^2}-d\big)- 4M^2 t^\alpha d\,.
\]
As a consequence, we can assume that $t\ll d$, since otherwise we readily get $\ell_f(\arc{rs}) - \ell_f(rs)>0$, and in this case of course~(\ref{toprove}) holds. Therefore, the last inequality can be rewritten as
\begin{equation}\label{finhe}
\ell_f(\arc{rs}) - \ell_f(rs) \geq  \frac {2t^2}{3Md}- 4M^2 t^\alpha d
= \bigg(\frac {2t^2}{3Md^2}- 4M^2 t^\alpha\bigg) \,d\,.
\end{equation}
There are then two cases: if the term between parenthesis is positive, then again~(\ref{toprove}) clearly holds. If, instead, it is negative, this implies
\[
t\leq 6M^3 d^{\frac 2{2-\alpha}}\,,
\]
and then~(\ref{finhe}) gives
\[
\ell_f(\arc{rs}) - \ell_f(rs)\geq - 4M^2 t^\alpha d \geq - 24M^5 d^{\frac{2+\alpha}{2-\alpha}}
\geq -12M^5 \ell(rs)^{\frac{2+\alpha}{2-\alpha}}\,,
\]
which is~(\ref{toprove}).
\end{proof}

We can now show a more refined estimate for $\ell_f(\arc{pq})-\ell_f(pq)$, which takes into account the maximal angle of deviation of the curve $\arc{pq}$ with respect to the segment $pq$. Let us be more precise: for every $w\in \arc{pq}$ we define $H=H(w)$ the projection of $w$ on the line containing $pq$. Moreover, we call $\theta=\theta(w)$ the angle $\angle wqp$ if $H$ is closer to $p$ than to $q$, and $\theta=\angle wpq$ otherwise, and we let $\bar\theta$ be the maximum among all the angles $\theta(w)$ for $w\in\arc{pq}$: Figure~\ref{figure2} depicts the situation.
\begin{figure}[thbp]
\input{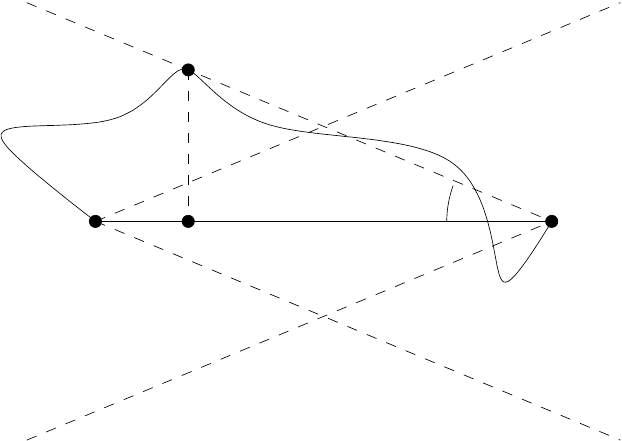_t}
\caption{Definition of $\bar\theta$.}\label{figure2}
\end{figure}
\begin{lemma}\label{lecce}
With the above notation, and calling $\rho=\ell(pq)$, there is $C=C(\alpha,M)$ for which
\begin{equation}\label{2poss}\begin{split}
\hbox{either}\quad \bar\theta \leq C\rho^{\frac \alpha{2-\alpha}},\,\qquad \qquad &\hbox{and then}\quad \ell_f(\arc{pq})-\ell_f(pq) \geq  - C\rho^{\frac{2+\alpha}{2-\alpha}}\,, \\
\hbox{or}\quad \bar\theta \geq C\rho^{\frac \alpha{2-\alpha}},\, \qquad\qquad &\hbox{and then}\quad \ell_f(\arc{pq})-\ell_f(pq) \geq \frac \rho{12M}\,\bar\theta^2\,.
\end{split}\end{equation}
\end{lemma}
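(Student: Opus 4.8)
The plan is to rerun the argument of Lemma~\ref{applynow}, this time keeping track of the maximal deviation \emph{angle} $\bar\theta$ rather than only of the maximal deviation \emph{distance} of $\arc{pq}$ from its chord $pq$. The key new geometric remark is that a large $\bar\theta$ forces the arc to be far from the chord. Let $w\in\arc{pq}$ be a point at which $\theta(\cdot)$ attains its maximum $\bar\theta$ (it exists by compactness of $\arc{pq}$ and continuity of $\theta$), let $H=H(w)$ be its orthogonal projection onto the line through $pq$, and assume with no loss of generality that $H$ is closer to $p$, so that $\bar\theta=\angle wqp$. In the right triangle of vertices $w$, $H$, $q$ one has $\ell(wH)=\ell(Hq)\tan\bar\theta$, while $\ell(Hq)\ge\rho/2$ precisely because $H$ is closer to $p$; hence, calling $t$ the maximal distance of $\arc{pq}$ from $pq$ as in Lemma~\ref{applynow}, we get
\[
t\ \ge\ \ell(wH)\ \ge\ \frac{\rho}{2}\,\tan\bar\theta\ \ge\ \frac{\rho}{2}\,\bar\theta\,,
\]
the last step being legitimate since, by Theorem~\ref{old} and the closeness of $p$ and $q$, the curve $\arc{pq}$ is almost tangent to the chord, so that $\bar\theta\ll 1$. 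A symmetric computation at the point realising $t$ gives the matching upper bound $t\le C\rho\,\bar\theta$.

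Given these two bounds, the estimate~(\ref{2poss}) follows in two steps that mimic Lemma~\ref{applynow}. First, inserting $t\ge\rho\bar\theta/2$ into the Euclidean estimate $\ell(\arc{pq})\ge 2\sqrt{d^2+t^2}$ of Lemma~\ref{applynow} (here $2d=\ell(pq)=\rho$) gives $\ell(\arc{pq})\ge\rho\sqrt{1+\bar\theta^2}\ge\rho(1+\bar\theta^2/3)$. Then, arguing exactly as in Lemma~\ref{applynow} with the auxiliary density $g(a)=f(\pi(a))$, where $\pi$ is the orthogonal projection onto the line through $pq$ (so that $g=f$ on $pq$), one obtains
\[
\ell_g(\arc{pq})-\ell_f(pq)=\ell_g(\arc{pq})-\ell_g(pq)\ \ge\ \frac1M\big(\ell(\arc{pq})-\ell(pq)\big)\ \ge\ \frac{\rho\,\bar\theta^2}{3M}\,,
\]
while the $\alpha$-H\"older bound~(\ref{genass}) gives $f\ge g-Mt^\alpha$ on $\arc{pq}$, so that, using $t\le C\rho\bar\theta$ and $\ell(\arc{pq})\le 2\rho$ (a consequence of~(\ref{esticirca})),
\[
\ell_f(\arc{pq})-\ell_g(\arc{pq})\ \ge\ -Mt^\alpha\,\ell(\arc{pq})\ \ge\ -C_0\,M\,\rho^{1+\alpha}\bar\theta^\alpha
\]
for some constant $C_0=C_0(\alpha,M)$. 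Summing the two inequalities,
\[
\ell_f(\arc{pq})-\ell_f(pq)\ \ge\ \rho\,\bar\theta^\alpha\Big(\frac{\bar\theta^{2-\alpha}}{3M}-C_0\,M\rho^\alpha\Big)\,.
\]

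Finally, the dichotomy is read off this last inequality by setting $C:=\max\big\{(6C_0M^2)^{1/(2-\alpha)},\,12M^5\big\}$. If $\bar\theta\ge C\rho^{\alpha/(2-\alpha)}$, then $\bar\theta^{2-\alpha}\ge 6C_0M^2\rho^\alpha$, hence the parenthesis above is at least $\bar\theta^{2-\alpha}/(6M)$ and therefore $\ell_f(\arc{pq})-\ell_f(pq)\ge\rho\bar\theta^2/(6M)\ge\rho\bar\theta^2/(12M)$, which is the second alternative in~(\ref{2poss}). If instead $\bar\theta<C\rho^{\alpha/(2-\alpha)}$, there is nothing left to prove, since Lemma~\ref{applynow} already gives $\ell_f(\arc{pq})-\ell_f(pq)\ge-12M^5\ell(pq)^{(2+\alpha)/(2-\alpha)}=-12M^5\rho^{(2+\alpha)/(2-\alpha)}\ge-C\rho^{(2+\alpha)/(2-\alpha)}$, which is the first alternative. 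The only genuinely new ingredient, and the one demanding some care, is the two-sided estimate $\tfrac\rho2\bar\theta\le t\le C\rho\bar\theta$ linking the deviation angle to the deviation distance; establishing its upper bound uses that the orthogonal projections of the points of $\arc{pq}$ onto the line through $pq$ stay within distance $O(\rho)$ of the segment $pq$, which is guaranteed by the ${\rm C}^1$ regularity of Theorem~\ref{old} and the closeness of $p$ and $q$. Everything else is a rerun of the proof of Lemma~\ref{applynow}.
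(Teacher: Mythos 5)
Your proof is correct and reaches the same conclusion, but it takes a genuinely different route from the paper at the key step. The paper fixes the extremal point $w$ realising $\bar\theta$, applies Lemma~\ref{applynow} separately to the two subarcs $\arc{pw}$ and $\arc{wq}$, and then compares the \emph{broken line} $pw\cup wq$ with the chord $pq$ (again via the projected density $g$), reading the crucial gain $\tfrac{\rho}{6M}\bar\theta^2$ off the pure Euclidean triangle inequality $\ell(pw)+\ell(wq)-\ell(pq)\gtrsim\rho\bar\theta^2$. Your proof instead works directly on the full arc, replacing the triangle estimate with the two-sided estimate $\tfrac\rho2\bar\theta\lesssim t\lesssim\rho\bar\theta$ between the deviation distance and the deviation angle, and then reruns the Lemma~\ref{applynow} computation once on $\arc{pq}$. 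This is somewhat more economical (a single application of the $g$-trick instead of two plus a triangle comparison), but it pays for it by requiring the upper bound $t\le C\rho\bar\theta$, which the paper's route does not need (in the paper the H\"older error is taken only over the segments $pw,\,wq$, whose distance from the chord is automatically controlled by $\bar\theta$).

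One point deserves care in your write-up. In Lemma~\ref{applynow} the quantity $t$ is the maximal distance of the arc to the \emph{segment} $pq$, and for that $t$ the upper bound $t\le C\rho\bar\theta$ is not a purely angular fact (a portion of the arc lying on the line but overshooting $p$ would have $\theta\equiv 0$ yet positive segment-distance). What your argument actually uses at the two places is, however, different quantities: for $\ell(\arc{pq})\ge 2\sqrt{d^2+t^2}$ the segment-distance is fine (and only the lower bound $t\ge\tfrac\rho2\bar\theta$ enters, which holds for the segment-distance); for the H\"older error $\ell_f(\arc{pq})-\ell_g(\arc{pq})\ge -Mt^\alpha\ell(\arc{pq})$ the relevant quantity is $\sup_{a\in\arc{pq}}|a-\pi(a)|$, i.e.\ the maximal distance to the \emph{line}, which \emph{is} bounded by $C\rho\bar\theta$ once one knows the projections stay within $O(\rho)$ of the segment (a consequence of~(\ref{esticirca}), as you note). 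So the two bounds you invoke hold for the two different ``$t$''s you actually need, and the proof is sound; it would be cleaner, though, to name the two quantities separately rather than calling both $t$, because the single two-sided claim $\tfrac\rho2\bar\theta\le t\le C\rho\bar\theta$ as stated is slightly misleading.
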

\begin{proof}
Let us fix a point $w\in\arc{pq}$ such that $\theta(w)=\bar\theta$, and let assume without loss of generality that $\bar\theta=\angle wqp$, as in Figure~\ref{figure2}. We can then apply Lemma~\ref{applynow}, first with $rs=pw$, and then with $rs=wq$, to get, also keeping in mind~(\ref{esticirca}), that
\begin{equation}\label{lecce1}\begin{split}
\ell_f(\arc{pq})&=\ell_f(\arc{pw}) + \ell_f(\arc{wq})
\geq \ell_f(pw) + \ell_f(wq)-12M^5 \Big(\ell(pw)^{\frac{2+\alpha}{2-\alpha}}+\ell(wq)^{\frac{2+\alpha}{2-\alpha}}\Big)\\
&\geq \ell_f(pw) + \ell_f(wq)-24M^5 \rho^{\frac{2+\alpha}{2-\alpha}}\,.
\end{split}\end{equation}
We have now to compare the lengths of the segments $pw$ and $qw$ with those of the segments $pH$ and $qH$. We can again argue defining the density $g$ as $g(a)=f(\pi(a))$, being $\pi$ the projection on the line containing the segment $pq$: then
\[\begin{split}
\ell_f(pw)-\ell_f(pH) &= \ell_g(pw)-\ell_g(pH) + \ell_f(pw)-\ell_g(pw)\\
&\geq \frac 1M\, \Big(\ell(pw)-\ell(pH)\Big) - M\ell_f(pw) \ell(wH)^\alpha\,,
\end{split}\]
and the analogous estimate of course works for $\ell_f(wq)-\ell_f(wH)$. Putting them together, and recalling that $\ell_f(pH)+\ell_f(qH)\geq \ell_f(pq)$, with strict inequality if $H$ is outside $pq$, we obtain
\[\begin{split}
\ell_f(pw)+\ell_f(wq)-\ell_f(pq) &\geq \frac 1M\, \Big(\ell(pw)+\ell(wq)-\ell(pq)\Big) - M\big(\ell_f(pw)+\ell_f(wq)\big) \ell(wH)^\alpha\\
&\geq \frac{\rho}{6M} \bar\theta^2 - 2M^2 \rho^{1+\alpha}\bar\theta^\alpha\,,
\end{split}\]
where we have again used that $\bar\theta\ll 1$, which comes as usual by~(\ref{esticirca}). Putting this estimate together with~(\ref{lecce1}), we get
\begin{equation}\label{lecce2}
\ell_f(\arc{pq})-\ell_f(pq) \geq \frac{\rho}{6M} \,\bar\theta^2 - 2M^2 \rho^{1+\alpha}\bar\theta^\alpha-24M^5 \rho^{\frac{2+\alpha}{2-\alpha}}\,.
\end{equation}
Now, notice that
\[
\rho^{1+\alpha}\bar\theta^\alpha \geq  \rho^{\frac{2+\alpha}{2-\alpha}} \qquad \Longleftrightarrow \qquad \bar\theta\geq \rho^{\frac \alpha{2-\alpha}}\qquad \Longleftrightarrow \qquad \rho\bar\theta^2 \geq  \rho^{1+\alpha}\bar\theta^\alpha \,.
\]
As a consequence, (\ref{lecce2}) implies the validity of both the cases in~(\ref{2poss}), up to have chosen a sufficiently large constant $C=C(M,\alpha)$.
\end{proof}

We are now ready to find a first result about the behaviour of the direction of the chords connecting points of the curve $\gamma$. Indeed, putting together Lemma~\ref{lecce} and Lemma~\ref{generic}, we get the next estimate.
\begin{lemma}\label{2.6}
Let $a,\, z$ be two points in $\gamma$ sufficiently close to each other, call $\rho=\ell(az)$, and let $w\in \arc{az}$ be a point closer to $a$ than to $z$. Then, there is $C=C(M,\alpha)$ such that
\[
\angle wza \leq C \rho^{\frac \alpha{3-2\alpha}}\,.
\]
\end{lemma}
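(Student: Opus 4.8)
The plan is to reduce the statement to an estimate for the maximal deviation angle of a suitable arc from its chord, and to prove that estimate by matching the lower bound of Lemma~\ref{lecce} against an \emph{improved} version of the competitor estimate behind Lemma~\ref{generic}.

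\emph{Setup.} First I would pick a point $z_0\in\arc{az}$ with $\ell(z_0a)=\ell(z_0z)=:\rho'$ — such a point exists by continuity, and $\rho'\approx\rho/2$ by~(\ref{esticirca}) — and apply to the ball $B_{\rho'}(z_0)$ the construction of Lemma~\ref{generic} together with Definition~\ref{defF}, producing the points $x,\bar x,y,\bar y$ and the competitor $F$. Since $a$ and $z$ lie on $\partial B_{\rho'}(z_0)$, respectively before and after $z_0$, one has $a\in\arc{x\bar x}$ and $z\in\arc{\bar y y}$; in particular $\arc{az}\subseteq\arc{xy}$, $\ell(\arc{xa})+\ell(\arc{zy})\leq l$, and $\rho_0:=\ell(xy)\approx\rho$. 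Now I would apply Lemma~\ref{lecce} to the arc $\arc{xy}$ and its chord, calling $\bar\theta$ the corresponding maximal angle. By~(\ref{esticirc}) the arc $\arc{xy}$ is nearly straight, so $\bar\theta\ll1$, and hence $\arc{xy}$ lies within distance $2\rho_0\bar\theta$ of the line through $x$ and $y$; moreover, by the Euclidean part of~(\ref{esticirca}), $\arc{xy}$ is almost monotone along $xy$, so the orthogonal projections of $a$ and of $w$ onto that line are at distance $\geq\rho/3$ from the projection of $z$ (here one uses that $w$ is closer to $a$ than to $z$). A direct computation then gives $\angle wza\leq C\bar\theta$, so it suffices to show $\bar\theta\leq C\rho^{\frac\alpha{3-2\alpha}}$.

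\emph{The two cases of Lemma~\ref{lecce}.} If $\bar\theta\leq C\rho_0^{\frac\alpha{2-\alpha}}$, then, since $\frac\alpha{2-\alpha}\geq\frac\alpha{3-2\alpha}$ and $\rho_0\approx\rho$, we are already done. Assume instead that $\ell_f(\arc{xy})-\ell_f(xy)\geq\frac{\rho_0}{12M}\bar\theta^2$. Here I would improve the competitor estimate: because $\bar\theta\ll1$, the arc $\arc{xy}$ stays within distance $2\rho_0\bar\theta$ of the chord $xy$, so the symmetric difference $E\Delta F$ — the region enclosed between $\arc{xy}$ and $xy$ — has Lebesgue measure at most $C\rho_0^2\bar\theta$, and therefore $|V_f(E)-V_f(F)|\leq C\rho_0^2\bar\theta$. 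Repeating the argument in the proof of Lemma~\ref{generic} — adjusting the volume of $F$ by Theorem~\ref{oldepsepsbeta} and using that $E$ is isoperimetric — but now with this better volume bound in place of~(\ref{put0}), one gets
\[
\ell_f(\arc{xy})-\ell_f(xy)=P_f(E)-P_f(F)\leq C\big(\rho_0^2\bar\theta\big)^{\frac1{2-\alpha}}\,.
\]

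\emph{Conclusion.} Combining the lower and upper bounds for $\ell_f(\arc{xy})-\ell_f(xy)$ gives
\[
\frac{\rho_0}{12M}\,\bar\theta^2\leq C\big(\rho_0^2\bar\theta\big)^{\frac1{2-\alpha}}\,,
\]
which rearranges to $\bar\theta^{\frac{3-2\alpha}{2-\alpha}}\leq C\rho_0^{\frac\alpha{2-\alpha}}$, i.e.\ $\bar\theta\leq C\rho_0^{\frac\alpha{3-2\alpha}}\leq C\rho^{\frac\alpha{3-2\alpha}}$, and then $\angle wza\leq C\bar\theta\leq C\rho^{\frac\alpha{3-2\alpha}}$, as wanted. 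I expect the delicate point to be the improved competitor estimate: one must keep $F$ well defined (this is the reason for going through the ball $B_{\rho'}(z_0)$ rather than cutting the corner at $a$ and $z$ directly) and make sure the perimeter comparison involves exactly $\arc{xy}$ and \emph{its own} chord, so that no error term of order $l$ or $\rho^{1+\alpha}$ creeps in when $\bar\theta$ is near the threshold $\rho^{\frac\alpha{2-\alpha}}$. It is precisely the gain from $\rho_0^2$ to $\rho_0^2\bar\theta$ in the volume — available thanks to the already-known ${\rm C}^1$ regularity — that upgrades the exponent from $\frac\alpha{4-2\alpha}$ to $\frac\alpha{3-2\alpha}$.
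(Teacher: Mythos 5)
Your proposal is correct, and it rests on the same engine as the paper's proof: the lower bound of Lemma~\ref{lecce} in its second regime, an improved volume estimate of order $\rho^2\bar\theta$ for the competitor, the volume adjustment via Theorem~\ref{oldepsepsbeta} together with isoperimetry, and the matching of the two bounds, which produces exactly the exponent $\frac{\alpha}{3-2\alpha}$ from $\rho\bar\theta^2\leq C\big(\rho^2\bar\theta\big)^{\frac1{2-\alpha}}$. The implementation, however, differs from the paper's in two respects. The paper centers the ball at $z$ itself (radius $\ell(az)$), takes as competitor the set whose boundary replaces $\arc{xy}$ by the \emph{two} segments $xz\cup zy$, applies Lemma~\ref{lecce} separately to the half-arcs $\arc{xz}$ and $\arc{zy}$ with angles $\bar\theta_1,\bar\theta_2$, and then runs a three-case analysis (both angles small, both large, one of each), preceded by an extra dichotomy $\bar\theta\gtrless C\rho^{\frac{2\alpha}{2-\alpha}}$ needed to absorb the $\pi Ml^2$ term in the volume bound coming from the short pieces $\arc{x\bar x},\arc{y\bar y}$; with this choice the bound $\angle wza\leq C\rho^{\frac\alpha{3-2\alpha}}$ is read off almost directly from $\theta_1(w)$. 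You instead center the ball at the midpoint $z_0$ of $\arc{az}$, keep the single-chord competitor of Definition~\ref{defF}, apply Lemma~\ref{lecce} once to the whole arc $\arc{xy}$, and recover $\angle wza\leq C\bar\theta$ by the strip argument (all of $\arc{xy}$ lies within distance $C\rho\bar\theta$ of the line through $x,y$, plus almost-monotonicity of the projection from~(\ref{esticirca}) and the hypothesis that $w$ is closer to $a$, which guarantees $\ell(wz)\geq\rho/2$ and the correct side). Your version buys a cleaner case analysis — since $\bar\theta$ is a maximum over the entire arc, the region between $\arc{xy}$ and its chord is contained in a strip of area $C\rho^2\bar\theta$ with no separate $l^2$ term — at the price of the extra geometric reduction from $\bar\theta$ to $\angle wza$, which the paper's splitting at $z$ makes essentially automatic. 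Both routes give the same constant structure and the same final exponent, so the proposal is a valid, mildly streamlined variant of the paper's argument.
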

\begin{proof}
Having a point $z\in\gamma$ and some $\rho>0$ very small, we can define the points $x,\, \bar x,\, y,\, \bar y$ as for Lemma~\ref{generic}, and by symmetry we can think $a\in \arc{x\bar x}$. We apply now twice Lemma~\ref{lecce}, once with $p_1=x$ and $q_1=z$, and once with $p_2=y$ and $q_2=z$. We have then the validity of~(\ref{2poss}) for the two angles $\bar\theta_1$ and $\bar\theta_2$; we claim that
\begin{equation}\label{finlem}
\bar\theta:= \max \big\{ \bar\theta_1,\, \bar\theta_2\big\} \leq C \rho^{\frac \alpha{3-2\alpha}}\,.
\end{equation}
Let us first see that this estimate implies the thesis. If the point $w$ is closer to $x$ than to $z$, then by definition and by~(\ref{finlem})
\[
\angle wzx = \theta_1(w) \leq \bar\theta_1\leq C \rho^{\frac \alpha{3-2\alpha}}\,.
\]
As a consequence, by~(\ref{easyfirst}) we have
\begin{equation}\label{torepeat}
\angle wza \leq \angle wzx + \angle xza
\leq C\rho^{\frac \alpha{3-2\alpha}} + 2\,\frac l\rho
\leq C\rho^{\frac \alpha{3-2\alpha}} + C \rho^{\frac \alpha{2-\alpha}}
\leq C\rho^{\frac \alpha{3-2\alpha}}\,,
\end{equation}
and the thesis is obtained. Suppose, instead, that $w$ is closer to $z$ than to $x$; since by assumption it is anyhow closer to $a$ than to $z$, and $\ell(ax)\leq l\ll \ell(wz)$ by~(\ref{easyfirst}), again using~(\ref{finlem}) we have
\[
\angle wzx \leq 2 \angle wxz = 2 \theta_1(w)\leq C\rho^{\frac \alpha{3-2\alpha}}\,,
\]
then the very same argument as in~(\ref{torepeat}) shows again the thesis. Summarizing, we have proved that~(\ref{finlem}) implies the thesis, and hence to conclude we only have to show the validity of~(\ref{finlem}).\par

We argue as in Lemma~\ref{generic}, defining the competitor set $F$ which has $\partial F=\partial E \setminus \arc{xy}\cup xz\cup zy$ as boundary. Notice that this is very similar to what we did in Definition~\ref{defF}, the only difference being that we are putting the two segments $xz$ and $zy$ instead of the segment $xy$. The very same argument that we presented after Definition~\ref{defF} still ensures that the set $F$ is well defined. As in Lemma~\ref{generic}, then, we have now to evaluate $|V_f(F)-V_f(E)|$ and $P_f(F)-P_f(E)$. Concerning the first quantity, we can get an estimate which is much better than~(\ref{put0}), thanks to the definition of $\bar\theta_1$ and $\bar\theta_2$.\par

\begin{figure}[thbp]
\input{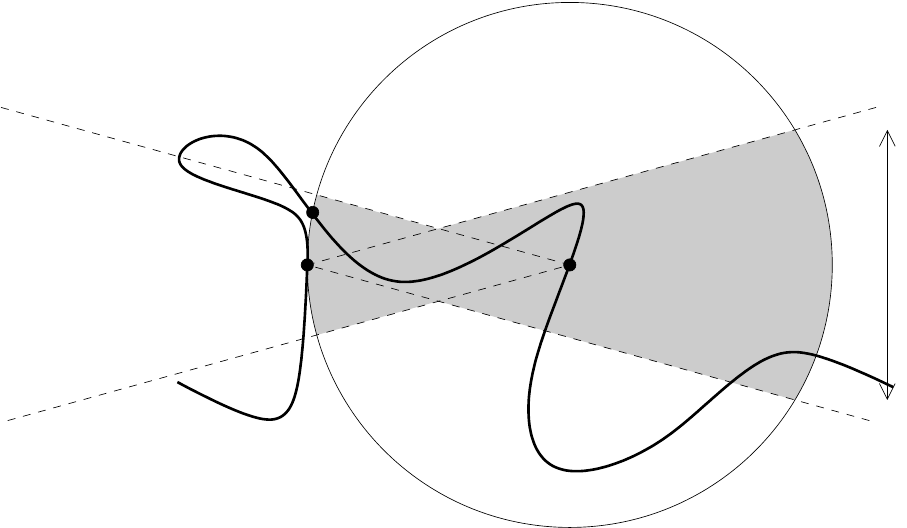_t}
\caption{Constraint on the position of the curve $\arc{xz}$.}\label{figure3}
\end{figure}

Let us be more precise. The curve $\arc{xz}$ is composed by two pieces; concerning the first part, $\arc{x\bar x}$, by definition this remains within a distance of at most $\ell(\arc{x\bar x})$ from $x$. The second part, $\arc{\overline x z}$, is contained in the shaded region of Figure~\ref{figure3}, which is the intersection between the ball $B_\rho(z)$ and the region of the points $w$ such that $\min\{\angle wxz,\, \angle wzx \} \leq \bar\theta_1$. Repeating the same argument with $\bar\theta_2,\, y$ and $\bar y$, and recalling that $l=\ell(\arc{x\bar x})+\ell(\arc{y\bar y})$, we have
\[
|V_f(E) - V_f(F)| \leq 9M\rho^2 \big(\bar\theta_1+\bar\theta_2) + \pi M l^2 \leq 
18M\rho^2\bar\theta+ \pi M l^2\,.
\]
Let us now observe that by~(\ref{easyfirst}) one has $l^2\leq C\rho^{\frac 4{2-\alpha}}$, and then
\[
l^2 \leq \rho^2\bar\theta\quad \Longleftarrow \quad C\rho^{\frac 4{2-\alpha}} \leq \rho^2\bar\theta 
\quad\Longleftrightarrow \quad
\bar\theta \geq C\rho^{\frac {2\alpha}{2-\alpha}}\,.
\]
There are then two possibilities: either $\bar\theta \leq C\rho^{\frac{2\alpha}{2-\alpha}}$, so we already have the validity of~(\ref{finlem}) and the proof is concluded, or $\bar\theta \geq C\rho^{\frac{2\alpha}{2-\alpha}}$, and then the last two estimates imply
\[
|V_f(E) - V_f(F)| \leq 22M\rho^2\bar\theta\,.
\]
Therefore, using Theorem~\ref{oldepsepsbeta} exactly as in the proof of Lemma~\ref{generic}, we find a set $\widetilde F$ having the same volume as $E$ (and then more perimeter) satisfying
\[
P_f(E) \leq P_f(\widetilde F) \leq P_f(F) + C \big(22M\rho^2 \bar\theta\big)^{\frac 1{2-\alpha}}
= P_f(F) + C \rho^{\frac 2{2-\alpha}}\bar\theta^{\frac 1{2-\alpha}} \,,
\]
from which we directly get
\begin{equation}\label{qufi}
\ell_f(\arc{xy})-\ell_f(xz)-\ell_f(zy) =P_f(E)-P_f(F)\leq C \rho^{\frac 2{2-\alpha}}\bar\theta^{\frac 1{2-\alpha}}\,.
\end{equation}
We now use the fact that~(\ref{2poss}) is valid both with $\bar\theta_1$ and with $\bar\theta_2$, as pointed out before. One has to distinguish three possible cases.\par

Since $\rho^{\frac \alpha{2-\alpha}}\leq \rho^{\frac \alpha{3-2\alpha}}$, if both $\bar\theta_1$ and $\bar\theta_2$ are smaller than $C\rho^{\frac \alpha{2-\alpha}}$ then so is $\bar\theta$, so~(\ref{finlem}) is true and there is nothing more to prove.\par

Suppose now that both $\bar\theta_1$ and $\bar\theta_2$ are bigger than $C\rho^{\frac \alpha{2-\alpha}}$. In this case, (\ref{2poss}) gives
\[
\ell_f(\arc{xy})-\ell_f(xz)-\ell_f(zy) =
\big(\ell_f(\arc{xz})-\ell_f(xz)\big) + \big(\ell_f(\arc{zy})-\ell_f(zy)\big)
\geq \frac \rho{12M} \,\big(\bar\theta_1^2+\bar\theta_2^2)
\geq \frac \rho{12M} \,\bar\theta^2\,,
\]
which together with~(\ref{qufi}) gives
\[
\rho\bar\theta^2 \leq C \rho^{\frac 2{2-\alpha}}\bar\theta^{\frac 1{2-\alpha}}\,,
\]
which is equivalent to~(\ref{finlem}), and then also in this case the proof is completed.\par

Finally, we have to consider what happens when only one between $\bar\theta_1$ and $\bar\theta_2$ is bigger than $C\rho^{\frac \alpha{2-\alpha}}$; just to fix the ideas, we can suppose that $\bar\theta_1\geq C\rho^{\frac \alpha{2-\alpha}}\geq \bar\theta_2$, hence in particular $\bar\theta=\bar\theta_1$. Applying then~(\ref{2poss}), this time we find
\[
\ell_f(\arc{xy})-\ell_f(xz)-\ell_f(zy) =
\big(\ell_f(\arc{xz})-\ell_f(xz)\big) + \big(\ell_f(\arc{zy})-\ell_f(zy)\big)
\geq \frac \rho{12M} \,\bar\theta^2 - C \rho^{\frac{2+\alpha}{2-\alpha}}
\geq \frac \rho{24M} \,\bar\theta^2\,,
\]
where the last inequality is true precisely because $\bar\theta\geq C \rho^{\frac \alpha{2-\alpha}}$. Exactly as before, putting this estimate together with~(\ref{qufi}) implies~(\ref{finlem}), and then also in the last possible case we obtained the proof.
\end{proof}

The last lemma is exactly what we needed to obtain the proof of our main Theorem~\mref{main}.

\begin{proof}[Proof (of Theorem~\mref{main})]
Let $E$ be an isoperimetric set for the ${\rm C}^{0,\alpha}$ density $f$. To show that $\partial E$ is of class ${\rm C}^{1,\frac \alpha{3-2\alpha}}$, let us select two generic points $z,\,a\in \partial E$ such that $\rho=\ell(za)\ll 1$. We want to show that
\begin{equation}\label{mainmain}
\angle wza \leq C \rho^{\frac \alpha{3-2\alpha}} \qquad \forall w \in \arc{az}\,,
\end{equation}
as this readily imply the thesis. Indeed, suppose that~(\ref{mainmain}) has been established, and call $\nu\in\S^1$ the direction of the segment $az$; since we already know that $\partial E$ is of class ${\rm C}^1$ by Theorem~\ref{old}, considering points $w\in \arc{az}$ which converge to $z$ we deduce by~(\ref{mainmain}) that $|\nu-\nu_z|\leq C \rho^{\frac \alpha{3-2\alpha}}$, where $\nu_z\in\S^1$ is the tangent vector of $\partial E$ at $z$. Since the situation of $a$ and of $z$ is perfectly symmetric, the same argument also shows that $|\nu-\nu_a|\leq C \rho^{\frac \alpha{3-2\alpha}}$, thus by triangual inequality we have found
\[
|\nu_a-\nu_z|\leq C \rho^{\frac \alpha{3-2\alpha}}\,.
\]
Since $a$ and $z$ are two generic points having distance $\rho$, and since $C$ only depends on $M$ and on $\alpha$, this estimate shows that $\partial E$ is of class ${\rm C}^{\frac \alpha{3-2\alpha}}$; therefore, the proof will be concluded once we show~(\ref{mainmain}). Notice that~(\ref{mainmain}) simply says that the estimate of Lemma~\ref{2.6} holds also without asking to the point $w$ to be closer to $a$ than to $z$.\par
To do so, let us recall that by~(\ref{esticirca}) it is $\ell(\arc{az}) \leq 2\rho$, let us write $a_0=a$, and let us define recursively the sequence $a_j$ by letting $a_{j+1}\in \arc {a_jz}$ be the point such that
\[
\ell(\arc{a_{j+1}z}) = \frac 23 \, \ell(\arc{a_jz})\,.
\]
Observe that $a_j$ is a sequence inside the curve $\arc{az}$, which converges to $z$, and moreover for every $j\in\N$ one has
\begin{equation}\label{stle}
\ell (a_j z) \leq \ell (\arc{a_j z}) = \bigg(\frac 23\bigg)^j \, \ell(\arc{az}) \leq 2\,\bigg(\frac 23 \bigg)^j\, \rho\,.
\end{equation}
Let us now take a point $w\in \arc{a_j a_{j+1}}$; again recalling~(\ref{esticirca}), by the definition of the points $a_j$ it is obvious that $w$ is closer to $a_j$ than to $z$; as a consequence, Lemma~\ref{2.6} applied to $a_j$ and $z$ ensures that
\[
\angle wz{a_j} \leq C\ell(a_jz)^{\frac \alpha{3-2\alpha}}
\leq C \, \kappa^j \rho^{\frac \alpha{3-2\alpha}} \qquad \forall \, w\in\arc{a_ja_{j+1}}\,,
\]
where we have also used~(\ref{stle}), and where $\kappa=(2/3)^{\frac \alpha{3-2\alpha}}<1$. Keeping in mind the obvious fact that $a_{j+1}\in \arc{a_ja_{j+1}}$ for every $j$, and then the above estimate is valid in particular for the point $a_{j+1}$, we deduce that for the generic $w\in \arc{a_j a_{j+1}}$ it is
\[
\angle wza = \angle wz{a_0} \leq \angle wz{a_j} + \sum_{i=0}^{j-1} \angle {a_{i+1}}z{a_i}
\leq C \rho^{\frac \alpha{3-2\alpha}} \sum_{i=0}^{j} \kappa^i
\leq C \rho^{\frac \alpha{3-2\alpha}}\,,
\]
where the last inequality is true because $\kappa=\kappa(\alpha)<1$. We have then established~(\ref{mainmain}), and then the proof is concluded.
\end{proof}

\end{document}